\newtheorem{thm}{Theorem}%[section]
\newtheorem{cor}[thm]{Corollary}
 \newtheorem{defn}[thm]{Definition}
\numberwithin{equation}{section}
\renewcommand{\footnote}{\endnote}
\newcommand{\ignore}[1]{}\makeglossary
\begin{document}
	\bibliographystyle{amsplain}
%	\subjclass{ 20D10, 05C25}
%	\keywords{groups generation; waiting time; Sylow subgroups; permutations groups}
	\title[A generalization of  Bernhard H. Neumann's question]{A generalization of a question\\ asked by B. H. Neumann}

	\author{Andrea Lucchini}
	\address{Andrea Lucchini\\ Universit\`a degli Studi di Padova\\  Dipartimento di Matematica \lq\lq Tullio Levi-Civita\rq\rq\\ Via Trieste 63, 35121 Padova, Italy\\email: lucchini@math.unipd.it}
%	\thanks{Partially supported by Universit\`a di Padova (Progetto di Ricerca di Ateneo: \lq\lq Invariable generation of groups\rq\rq).}

	\begin{abstract}Let $w \in F_2$ be a word and let $m$ and $n$ be two positive integers. We say that a finite group $G$ has the $w_{m,n}$-property if however a set $M$ of $m$ elements and a set $N$ of $n$ elements of the group is chosen, there exist
		at least one element of $x \in M$ and at least one element of $y \in M$ such that
		$w(x,y)=1.$	Assume that there exists a constant $\gamma < 1$ such that whenever $w$ is not an identity in a finite group $X$, then the probability that $w(x_1,x_2)=1$ in $X$ is at most $\gamma.$
		If $m\leq n$ and $G$ satisfies the $w_{m,n}$-property, then either
		$w$ is an identity in $G$ or $|G|$ is bounded in terms of $\gamma, m$ and $n$.
		We apply this result to the 2-Engel word.
\end{abstract}
	\maketitle

%%%%%%%%%%%%%%% ABOUT DIAMETER %%%%%%%%%%%%%%%%%%%%%%%%%%%%%%%%%
In 2001 Bernhard H. Neumann  asked the following question \cite{neu}: {\sl{let $G$ be a finite group  and assume
that however a set $M$ of $m$ elements and a set $N$ of $n$ elements of the group is chosen,
at least one element of $M$ commutes with at least one element of $N .$ What relations between $|G|, m, n$ guarantee that $G$ is abelian?}}

A partial answer has been given by A. Abdollahi, A. Azad, A. Mohammadi Hassanabadi
	and M. Zarrin \cite{aamz}. They proved that there exists a function $f: \mathbb N \times \mathbb N \to \mathbb N$  with the following property.
	Let $G$ be a finite group and assume
	that however a set $M$ of $m$ elements and a set $N$ of $n$ elements of the group is chosen,
	at least one element of $M$ commutes with at least one element of $N.$
	If ${|G| > f(n,m)}$, then $G$ is abelian.
	It follows from the proof that one can take $f(n,m)={c^{m+n}\max\{m,n\}}$
	where $c$ is a constant appearing in the following theorem, proved by L. Pyber in 1987 \cite{py}: {if a finite group $G$ contains at most $n$ pairwise non-commuting elements, then $|G/Z(G)|\leq  c^n$.} An alternative elementary proof, with a better estimation of the function $f$ can be obtained as a corollary of the following theorem.

\begin{thm}\label{prin}\cite[Theorem 1]{al}
	Let $\mathfrak X$ be a class of groups and suppose that there exists
	a real positive number $\gamma$ with the following property: if $X$ is a finite group and the probability that two randomly chosen elements of $X$ generate a group in  $\mathfrak X$ is
	greater than $\gamma,$ then $X$ is in  $\mathfrak X$. Assume that a finite group $G$ is such that for every two subsets $M$
	and $N$ of cardinalities $m$ and $n,$ respectively, there exist $x \in M$ and $y \in N$ such that $\langle x, y \rangle\in \mathfrak X.$ If $m\leq n,$ then
	either $G \in \mathfrak X$ or $$|G|\leq \left(\frac{2}{1-\gamma}\right)^m(n-1).$$ 
\end{thm}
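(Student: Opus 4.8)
The plan is to translate the hypothesis on $G$ into a forbidden-subgraph condition and then run a Kővári–Sós–Turán (Zarankiewicz) type counting argument. Assume $G \notin \mathfrak X$; the goal is to bound $|G|$. Applying, in contrapositive form, the defining property of $\gamma$ to the single group $X = G$, the probability that two randomly chosen elements of $G$ generate a group in $\mathfrak X$ is at most $\gamma$; hence, writing $L = |G|$, the number of ordered pairs $(x,y) \in G \times G$ with $\langle x, y\rangle \notin \mathfrak X$ is at least $(1-\gamma)L^2$. This is the only point at which the assumption on $\gamma$ is used, and it is used on $G$ alone.

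Next I would encode the situation bipartitely. Take two copies $X, Y$ of the underlying set of $G$ and join $x \in X$ to $y \in Y$ precisely when $\langle x, y\rangle \notin \mathfrak X$ (call such a pair \emph{bad}). The hypothesis on $G$ says exactly that there is no copy of the complete bipartite graph $K_{m,n}$ with its $m$-part in $X$ and its $n$-part in $Y$: such a copy would be a pair of subsets $M, N$ of $G$, of sizes $m$ and $n$, with $\langle x,y\rangle \notin \mathfrak X$ for every $(x,y) \in M \times N$, contradicting the statement. Equivalently, every $m$-subset $M \subseteq X$ has at most $n-1$ common neighbours in $Y$.

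The heart of the argument is a double count of the configurations $(M, y)$ where $M$ is an $m$-subset of $X$, $y \in Y$, and $y$ is adjacent to every vertex of $M$. Writing $d(y)$ for the degree of $y \in Y$, counting by $y$ gives $\sum_{y} \binom{d(y)}{m}$, while counting by $M$ and using the previous paragraph gives at most $\binom{L}{m}(n-1)$. Since $\sum_y d(y)$ is the number of bad pairs, the average degree satisfies $\bar d \ge (1-\gamma)L$, and by convexity of the binomial coefficient $\sum_y \binom{d(y)}{m} \ge L\binom{\bar d}{m}$. Combining these,
$$ L\binom{\bar d}{m} \le \binom{L}{m}(n-1), \qquad \text{so} \qquad L \le (n-1)\,\frac{\binom{L}{m}}{\binom{\bar d}{m}} = (n-1)\prod_{i=0}^{m-1}\frac{L-i}{\bar d - i}. $$
It then remains to bound each factor $\frac{L-i}{\bar d - i}$ by $\frac{2}{1-\gamma}$. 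Using $\bar d \ge (1-\gamma)L$, this holds as soon as $i \le \tfrac12(1-\gamma)L$ for every $i < m$, i.e. once $L \ge \frac{2(m-1)}{1-\gamma}$; the opposite case $L < \frac{2(m-1)}{1-\gamma}$ is harmless because, as $m \le n$ and $\frac{2}{1-\gamma} > 1$, it already forces $L \le (n-1)\left(\frac{2}{1-\gamma}\right)^m$. Either way one obtains $|G| \le \left(\frac{2}{1-\gamma}\right)^m (n-1)$.

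I expect the only genuine technical obstacle to be the rigorous justification of the two binomial estimates. The convexity (Jensen) step must be handled with care when some vertices of $Y$ have degree below $m-1$, which I would address either by invoking the discrete convexity of $d \mapsto \binom{d}{m}$ (its second difference is $\binom{d-1}{m-2}\ge 0$) or by discarding the low-degree vertices; in the regime that matters, $\bar d \ge (1-\gamma)L$ is already large, so this is routine. Likewise the ratio bound is what forces the split into the two cases above. The factor $2$ in the final bound is precisely the slack absorbed by these crude estimates; a sharper analysis would bring the base closer to $\frac1{1-\gamma}$, but the stated bound drops out directly from the argument above.
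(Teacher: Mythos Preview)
Your proposal is correct and follows the same overall strategy as the paper: encode the hypothesis as a forbidden $K_{m,n}$ in a (bipartite) graph on $G$, use that $G\notin\mathfrak X$ forces at least $(1-\gamma)|G|^2$ edges, and then apply a K\H{o}v\'ari--S\'os--Tur\'an bound. The only real difference is packaging: the paper invokes the K\H{o}v\'ari--S\'os--Tur\'an inequality $\eta\le (n-1)^{1/m}|G|^{2-1/m}+(m-1)|G|$ as a black box and then does a short algebraic step (assuming $|G|\ge n-1$ to absorb the lower-order term), whereas you reprove that inequality from scratch via the standard double count of $(M,y)$ pairs and finish with a product-of-ratios estimate. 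Your route is more self-contained and makes transparent where the factor $2$ comes from; the paper's route is shorter once KST is granted. The two endgames are equivalent up to routine estimates, and the technical caveats you flag (Jensen when some degrees are below $m-1$, the case split on $L$ versus $\tfrac{2(m-1)}{1-\gamma}$) are exactly the right ones and are easily handled.
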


\begin{cor}\label{coro}Let $G$ be a finite group  and assume
	that however a set $M$ of $m$ elements and a set $N$ of $n$ elements of the group is chosen,
	at least one element of $M$ commutes with at least one element of $N .$
	If $m\leq n$ and $G$ is not abelian, then $$|G|\leq \left(\frac{16}{3}\right)^m(n-1).$$
\end{cor}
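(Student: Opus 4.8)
The plan is to derive Corollary \ref{coro} as a direct application of Theorem \ref{prin}. I would take $\mathfrak{X}$ to be the class of abelian groups. The statement $\langle x,y\rangle \in \mathfrak{X}$ then simply means $\langle x,y\rangle$ is abelian, which for a two-generator group is equivalent to $x$ and $y$ commuting. So the commuting hypothesis on the sets $M$ and $N$ translates exactly into the hypothesis of Theorem \ref{prin} for this choice of $\mathfrak{X}$, and the conclusion $G \in \mathfrak{X}$ becomes ``$G$ is abelian.'' The entire content of the corollary is therefore to verify the probabilistic threshold hypothesis of Theorem \ref{prin} for the abelian class and to identify the correct value of $\gamma$.

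Thus the main step is to establish that there is a constant $\gamma$ such that, if a finite group $X$ has the property that two randomly chosen elements commute with probability strictly greater than $\gamma$, then $X$ is abelian; and moreover that one may take $\gamma = 5/8$, so that $2/(1-\gamma) = 16/3$ and the bound from Theorem \ref{prin} becomes $(16/3)^m(n-1)$. This is precisely the classical fact that the \emph{commuting probability} $\mathrm{cp}(X) = |\{(x,y): xy=yx\}|/|X|^2$ of a finite nonabelian group is at most $5/8$. I would recall the standard proof: $\mathrm{cp}(X)$ equals $k(X)/|X|$, where $k(X)$ is the number of conjugacy classes of $X$; writing $|X/Z(X)| = q$ and noting that a nonabelian group forces $q \geq 4$ (since $X/Z(X)$ cannot be cyclic), the centralizer of any noncentral element has index at least the smallest prime dividing $q$, and a direct class-counting estimate yields $\mathrm{cp}(X) \leq (q+3)/(4q) \leq 5/8$, with equality exactly when $q = 4$, i.e. $X/Z(X) \cong C_2 \times C_2$.

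With this bound in hand the verification of the hypothesis is immediate: if $\mathrm{cp}(X) > 5/8$ then $X$ cannot be nonabelian, hence $X$ is abelian, which is the required threshold property for $\mathfrak{X}$ the abelian class with $\gamma = 5/8$. Substituting $\gamma = 5/8$ into the conclusion of Theorem \ref{prin} gives, when $G$ is not abelian, the stated bound
\[
|G| \leq \left(\frac{2}{1-\gamma}\right)^m (n-1) = \left(\frac{16}{3}\right)^m (n-1).
\]

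I do not anticipate a genuine obstacle here, since the corollary is designed to fall out of the theorem; the only real work is supplying the value $5/8$ for the commuting probability of a nonabelian finite group, and the one point to double-check is that Theorem \ref{prin} truly requires only a \emph{threshold} implication (probability $> \gamma$ implies membership in $\mathfrak{X}$) rather than a two-sided statement, so that the classical $5/8$ bound suffices verbatim. Matching $2/(1-\gamma) = 16/3$ pins down $\gamma = 5/8$ as the sharp constant, confirming the exponent base in the final estimate.
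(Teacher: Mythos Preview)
Your proposal is correct and follows essentially the same approach as the paper: apply Theorem~\ref{prin} with $\mathfrak{X}$ the class of abelian groups and $\gamma = 5/8$, the latter being Gustafson's bound on the commuting probability of a finite nonabelian group. The only difference is that you sketch the proof of the $5/8$ bound whereas the paper simply cites Gustafson~\cite{gu}.
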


\begin{proof}
W. H. Gustafson \cite{gu} proved that if $G$ is a finite non-abelian group, then the probability that a randomly chosen pair of elements of $G$ commutes is at most $\frac{5}{8}.$ So the statement follows immediately applying Theorem \ref {prin} to the class of finite abelian groups and taking $\gamma=\frac{5}{8}.$ 
\end{proof}

In this short note we show that, essentially with the same arguments, it can be proved a result similar to Theorem \ref{prin}, but involving words in place of classes, and in particular, a result similar to Corollary \ref{coro},  involving the 2-Engel word in place of the commutator word.

\begin{defn}Let $w \in F_2$ be a word and let $m$ and $n$ be two positive integers. We say that a finite group $G$ has the $w_{m,n}$-property if however a set $M$ of $m$ elements and a set $N$ of $n$ elements of the group is chosen, there exist
	at least one element of $x \in M$ and at least one element of $y \in M$ such that
	$w(x,y)=1.$
\end{defn}

Our main result is the following.
\begin{thm}\label{main}Assume that a word $w \in F_2$ has the property that there exists a constant $\gamma < 1$ such that whenever $w$ is not an identity in a finite group $X$, then the probability that $w(x_1,x_2)=1$ in $X$ is at most $\gamma.$
	If $m\leq n$ and a finite group $G$ satisfies the $w_{m,n}$-property, then either
	$w$ is an identity in $G$ or
$$|G|\leq \left(\frac{2}{1-\gamma}\right)^m(n-1).$$
\end{thm}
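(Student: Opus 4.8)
The plan is to prove the contrapositive: assuming that $w$ is not an identity in $G$ and that $|G|>\left(\frac{2}{1-\gamma}\right)^m(n-1)$, I will exhibit a set $M$ of $m$ elements and a set $N$ of $n$ elements witnessing the failure of the $w_{m,n}$-property, i.e. with $w(x,y)\neq 1$ for every $x\in M$ and every $y\in N$. This runs parallel to Theorem \ref{prin}, with the event ``$\langle x,y\rangle\in\mathfrak X$'' replaced throughout by ``$w(x,y)=1$''; the role played there by the hypothesis on the class $\mathfrak X$ is played here by the standing assumption, which guarantees that, since $w$ is not an identity in $G$, one has $\prob(w(x_1,x_2)=1)\le\gamma$ in $G$.

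For $y\in G$ write $b(y)=|\{x\in G:w(x,y)\neq1\}|$. The probability bound reads $\sum_{y\in G}\bigl(|G|-b(y)\bigr)=|\{(x,y):w(x,y)=1\}|\le\gamma|G|^2$, so that $\frac{1}{|G|^{2}}\sum_{y\in G}b(y)\ge 1-\gamma$. I would then choose $x_1,\dots,x_m$ independently and uniformly at random in $G$ and estimate the expected size of the \emph{bad set} $B=\{y\in G:w(x_i,y)\neq1\text{ for all }i\}$. By independence a fixed $y$ lies in $B$ with probability $(b(y)/|G|)^m$, so convexity of $t\mapsto t^m$ (Jensen's inequality applied to the average of the quantities $b(y)/|G|$) gives
\[
\mathbb{E}[|B|]=\sum_{y\in G}\left(\frac{b(y)}{|G|}\right)^m\ge |G|\left(\frac{1}{|G|^{2}}\sum_{y\in G}b(y)\right)^m\ge |G|(1-\gamma)^m .
\]
Hence some choice of $x_1,\dots,x_m$ yields a bad set of size at least $|G|(1-\gamma)^m$, which by hypothesis exceeds $2^m(n-1)$.

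It remains to extract an honest counterexample: a set $M$ of exactly $m$ \emph{distinct} elements together with a disjoint set $N$ of $n$ elements, all of whose cross pairs are bad. The randomly chosen $x_i$ need not be pairwise distinct, and $N$ must avoid $M$, but both points are absorbed by the generous room in the bound. Concretely, the outcomes in which the $x_i$ are not pairwise distinct contribute at most $\binom{m}{2}$ to the expectation (each contributes at most $|G|$ and they occur with probability at most $\binom{m}{2}/|G|$), so a distinct tuple still produces a bad set with $|B|\ge |G|(1-\gamma)^m-\binom{m}{2}$; taking $M=\{x_1,\dots,x_m\}$ and $N$ equal to any $n$ elements of $B\setminus M$ then contradicts the $w_{m,n}$-property, since $m\le n$ and the factor $2^m$ leaves ample slack to guarantee $|B\setminus M|\ge n$.

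I expect the essential point — and the place where the word setting genuinely differs from the abelian case of Corollary \ref{coro} — to be that for a general word the solution set $\{y:w(x,y)=1\}$ is not a subgroup, so there is no centralizer to exploit and a step-by-step greedy choice of the $x_i$ cannot be controlled: that would require a probability bound on arbitrary subsets of $G$, which need not hold. The decisive idea is therefore to argue globally and all at once, applying the probability hypothesis to $G$ a single time and using convexity over the whole group, which bypasses any need for local structure; the gap between the resulting exponent base $(1-\gamma)^{-1}$ and the stated $2(1-\gamma)^{-1}$ comfortably absorbs the combinatorial bookkeeping of distinctness and disjointness.
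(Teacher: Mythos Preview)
Your argument is correct and, at its mathematical core, coincides with the paper's. The paper constructs the directed graph on $G$ with an arc $x\to y$ whenever $w(x,y)\neq1$, observes that the $w_{m,n}$-property forbids a copy of $\vec K_{m,n}$, and then applies the directed K\H{o}v\'ari--S\'os--Tur\'an theorem to the edge lower bound $|\vec E|\ge(1-\gamma)|G|^2$; a short manipulation yields the stated inequality. Your random-$m$-tuple computation with Jensen's inequality is precisely the standard double-counting proof of K\H{o}v\'ari--S\'os--Tur\'an, so you have inlined that theorem rather than quoted it. What your version buys is a self-contained argument that handles the asymmetry of $w$ automatically, without invoking a directed variant of the extremal theorem; what it costs is the distinctness bookkeeping at the end. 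That bookkeeping does go through (and is lighter than you suggest: nothing in the $w_{m,n}$-property requires $M$ and $N$ to be disjoint, so you only need $|B|\ge n$, and since $|B|$ is an integer with $|B|>2^m(n-1)-\binom{m}{2}\ge n-1$ for all $m\le n$, this follows). The paper's black-box approach avoids this entirely. Your final paragraph is commentary rather than proof and can be dropped.
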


The proof of Theorem \ref{prin} relies on
the	K\"{o}v\'{a}ri-S\'{o}s-Tur\'{a}n theorem \cite{kst}. %, stating that, if $m\leq n$ are two positive integers, then a graph with $t$ vertices and at least $((n-1)^{1/m}t^{2-1/m}+(m-1)t)/2$ edges, contains a copy of the complete bipartite graph $K_{m,n}.$
 The proof of Theorem \ref{main} is quite similar, but requires a version of K\"{o}v\'{a}ri-S\'{o}s-Tur\'{a}n theorem for direct graphs (see for example \cite[Section 3]{swa}).  Let  $\vec K_{r,s}$ be the
 complete bipartite directed graph in which the vertex set is a disjoint union $A \cup B$ with $|A| = r$ and $|B| = s,$ and an arc is
 directed from each vertex of $A$ to each vertex of $B.$ 
\begin{thm}[K\"{o}v\'{a}ri-S\'{o}s-Tur\'{a}n]\label{kst}
	Let $\vec \Gamma = (V, \vec E)$ be a directed graph with $|V|= t$.
vertices. Suppose that $\vec \Gamma$ does not contain a copy of $\vec K_{r,s}.
$ Then $$|\vec E|\leq (s-1)^{\frac{1}{r}}t^{2-\frac{1}{r}}+(r-1)t.$$
\end{thm}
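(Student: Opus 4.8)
The plan is to run the standard double-counting proof of the K\"ov\'ari--S\'os--Tur\'an bound, organised around in-neighbourhoods so that the orientation of the arcs is respected. I assume throughout that $\vec\Gamma$ is a simple directed graph without loops, which is the setting relevant to the application and the one in which the stated constant $(r-1)t$ is correct. For a vertex $v$ write $N^-(v)=\{u : (u,v)\in\vec E\}$ for its in-neighbourhood and $d^-(v)=|N^-(v)|$, so that $|\vec E|=\sum_{v\in V}d^-(v)$. The decisive reformulation of the hypothesis is this: a copy of $\vec K_{r,s}$ is precisely a pair of \emph{disjoint} vertex sets $(A,B)$ with $|A|=r$, $|B|=s$ and $A\subseteq N^-(b)$ for every $b\in B$. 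I would then count, in two ways, the number $P$ of pairs $(A,b)$ with $A$ an $r$-subset of $N^-(b)$. Summing over $b$ gives $P=\sum_{v\in V}\binom{d^-(v)}{r}$. Fixing instead an $r$-set $A$, any vertex $b$ with $A\subseteq N^-(b)$ satisfies $b\notin A$ automatically (there are no loops, so $b\notin N^-(b)\supseteq A$); hence $s$ such vertices would be disjoint from $A$ and yield a forbidden $\vec K_{r,s}$, so each $A$ is counted at most $s-1$ times. This gives
$$\sum_{v\in V}\binom{d^-(v)}{r}\;=\;P\;\leq\;(s-1)\binom{t}{r}.$$

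The second half is convexity plus algebra. Let $\bar d=|\vec E|/t$ be the average in-degree. The function $x\mapsto\binom{x}{r}$, read as the polynomial $x(x-1)\cdots(x-r+1)/r!$ on $[r-1,\infty)$ and as $0$ on $[0,r-1]$, is convex on $[0,\infty)$ and agrees with the ordinary binomial coefficient at every nonnegative integer; Jensen's inequality therefore gives $t\binom{\bar d}{r}\leq\sum_v\binom{d^-(v)}{r}\leq(s-1)\binom{t}{r}$. If $\bar d<r-1$ then $|\vec E|=t\bar d<(r-1)t$ and there is nothing to prove, so I may assume $\bar d\geq r-1$. Using $\binom{\bar d}{r}\geq(\bar d-r+1)^r/r!$ and $\binom{t}{r}\leq t^r/r!$, the displayed inequality collapses to $(\bar d-r+1)^r\leq(s-1)\,t^{r-1}$, hence $\bar d\leq(s-1)^{1/r}t^{1-1/r}+(r-1)$, and multiplying by $t$ yields exactly $|\vec E|\leq(s-1)^{1/r}t^{2-1/r}+(r-1)t$.

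All the ideas sit in the first paragraph, and the step I expect to need the most care is the convexity bookkeeping: since $x\mapsto\binom{x}{r}$ is genuinely convex only on $[r-1,\infty)$, one must justify the extension used in Jensen's inequality and treat the low-average-degree case $\bar d<r-1$ separately (as above). The only place where directedness really matters is the disjointness of the two classes of $\vec K_{r,s}$; phrasing everything through in-neighbourhoods of a loopless digraph makes $b\notin A$ automatic, which is exactly what keeps the count $P\leq(s-1)\binom{t}{r}$ clean and lets the classical undirected argument go through essentially verbatim.
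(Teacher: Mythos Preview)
The paper does not supply its own proof of this theorem: it is quoted as a known result, with a reference to \cite{swa} for a directed-graph version of the K\"ov\'ari--S\'os--Tur\'an theorem, and is then applied as a black box in the proof of Theorem~\ref{main}. There is therefore no in-paper argument to compare your proposal against.

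That said, your proof is correct. It is exactly the classical double-counting argument, organised via in-neighbourhoods so that the orientation is respected; the observation that loop-freeness forces $b\notin A$ is precisely what makes the ``disjoint parts'' requirement of $\vec K_{r,s}$ automatic and lets the undirected bound go through with the same constants. Your handling of the convexity issue (extending $x\mapsto\binom{x}{r}$ by zero on $[0,r-1]$ and disposing of the case $\bar d<r-1$ separately) is the standard and correct way to make Jensen's inequality rigorous here. One could remark that the directed setting actually simplifies matters slightly compared with the undirected theorem: in the undirected case a common neighbour $b$ of $A$ might lie in $A$, costing a factor in the count, whereas here the absence of loops rules that out --- you note this, and it is the one genuinely ``directed'' point in the argument.
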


\begin{proof}[Proof of Theorem \ref{main}]
Suppose that $G$ satisfies the	$w_{m,n}$-property. Consider the direct graph  $\Gamma_w(G)$ whose vertices are the elements of $G$ and in which there is an edge  $x_1\mapsto x_2$ if and only if $w(x_1,x_2)\neq 1.$  If  $w$ is not an identity in $G$,
then the probability that two vertices of $\Gamma_{w}(G)$ are joined by an edge is at least $1-\gamma$, so we must have
	\begin{equation}\label{uno}
	\eta \geq {(1-\gamma)|G|^2}.
	\end{equation}
	On the other hand, since $G$ satisfies the $w_{m,n}$-property, the graph  $\Gamma_w(G)$ cannot contain the   $\vec K_{m,n}$ as a subgraph.
 By Theorem \ref{kst},
	\begin{equation}\label{due}\eta\leq {(n-1)^{1/m}|G|^{2-1/m}+(m-1)|G|}.
	\end{equation}
	Combining (\ref{uno}) and \ref{due}, we deduce
	\begin{equation}\label{tre}
	\left( \frac{n-1}{|G|}\right)^{1/m}+\frac{n-1}{|G|}\geq 
		\left( \frac{n-1}{|G|}\right)^{1/m}+\frac{m-1}{|G|}\geq 1-\gamma.
	\end{equation}
We may assume $|G|\geq n-1$. This implies 	$\left(\frac{n-1}{|G|}\right)^{1/m}\geq\frac{n-1}{|G|}$ and therefore it follows from (\ref{tre}) that
\begin{equation}
\left(\frac{n-1}{|G|}\right)^{1/m}\geq \frac{1-\gamma}2.
\end{equation}
This implies
$$|G|\leq \left(\frac{2}{1-\gamma}\right)^m(n-1).\qedhere$$
\end{proof}

\begin{cor}\label{ma}
Let $w=[x,y,y]$ be the 2-Engel word. There exists a constant $\tau$ such that if $m\leq n$ and  $G$ satisfies the $w_{m,n}$-property, then either
$w$ is an identity in $G$ or
$|G|\leq \tau^m(n-1).$
\end{cor}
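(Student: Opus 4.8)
The plan is to obtain Corollary~\ref{ma} as a direct instance of Theorem~\ref{main}. Taking $w=[x,y,y]$, the whole of the $w_{m,n}$-machinery and the dichotomy ``$w$ is an identity in $G$'' (i.e. $G$ is a $2$-Engel group) versus a bound on $|G|$ are already supplied by Theorem~\ref{main}; setting $\tau=2/(1-\gamma)$ turns its conclusion $|G|\le(2/(1-\gamma))^m(n-1)$ into the desired $|G|\le\tau^m(n-1)$. Thus the entire task reduces to verifying the single hypothesis of Theorem~\ref{main} for the $2$-Engel word: to produce a constant $\gamma<1$ such that, whenever $[x,y,y]$ is not an identity in a finite group $X$, the probability that $[x_1,x_2,x_2]=1$ in $X$ is at most $\gamma$. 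This is the analogue, for the $2$-Engel word, of Gustafson's $5/8$ bound used in Corollary~\ref{coro}.

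To establish this probabilistic gap I would first record the reformulation $[x,y,y]=1\iff[x,y]\in C_G(y)$, so that for fixed $y$ the relevant set is $R(y)=\{x\in G:[x,y]\in C_G(y)\}$ and the probability in question equals $|G|^{-2}\sum_{y}|R(y)|$. The gap would then follow from a limiting argument. Suppose no such $\gamma$ exists; then there are finite groups $G_i$ in which $[x,y,y]$ is not an identity but in which the probability that $[x_1,x_2,x_2]=1$ tends to $1$. Passing to an ultraproduct $\mathbb{G}=\prod_i G_i/\mathcal{U}$ equipped with the finitely additive invariant probability measure induced by the normalized counting measures, one obtains a group in which the relation $[x,y,y]=1$ holds for almost every pair $(x,y)$. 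The final step is to upgrade ``almost every'' to ``every'', so that $[x,y,y]$ becomes an identity in $\mathbb{G}$; combined with {\L}o\'s's theorem this contradicts the fact that $[x,y,y]$ fails to be an identity in each $G_i$. Alternatively one may simply invoke a known probabilistic gap for Engel words.

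The main obstacle is exactly this last ``almost every $\Rightarrow$ every'' implication. Unlike the commutator case, where $\{x:[x,y]=1\}=C_G(y)$ is a subgroup and the bound flows from elementary class-counting, the set $R(y)$ is in general \emph{not} a subgroup: from $[x_1x_2,y]=[x_1,y]^{x_2}[x_2,y]$ one sees that $[x_1,y],[x_2,y]\in C_G(y)$ does not force $[x_1x_2,y]\in C_G(y)$, since conjugation by $x_2$ can move $[x_1,y]$ out of $C_G(y)$. Hence the ``a large subgroup must be the whole group'' principle is unavailable, and one must instead exploit the specific algebra of the $2$-Engel identity (for instance Levi's description of $2$-Engel groups, in which $\langle x\rangle^{G}$ is abelian for every $x$). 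As a consistency check that a gap is plausible, note that for $G=H\times A$ with $H$ a fixed non-$2$-Engel group and $A$ any $2$-Engel group, the probability that $[x_1,x_2,x_2]=1$ equals the corresponding probability for $H$ alone, a constant strictly below $1$ independent of $A$; enlarging $G$ in this way therefore cannot push the probability to $1$ without making $G$ itself $2$-Engel.
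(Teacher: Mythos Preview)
Your overall reduction is exactly the paper's: apply Theorem~\ref{main} to $w=[x,y,y]$ and set $\tau=2/(1-\gamma)$, so the only issue is the probabilistic gap hypothesis. The paper does not attempt to prove this gap; it simply cites the result of Delizia, Jezernik, Moravec and Nicotera \cite{dn}, which is precisely your ``alternatively one may simply invoke a known probabilistic gap for Engel words''. Taken that way, your proposal and the paper's proof coincide.

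Your primary route via ultraproducts, however, is not a proof as it stands, and you correctly identify where it breaks. The step from ``$[x,y,y]=1$ for almost every pair in $\mathbb{G}$'' to ``$[x,y,y]$ is an identity in $\mathbb{G}$'' is exactly the nontrivial content of the gap result itself; it does not follow from soft measure-theoretic or model-theoretic considerations alone, and your Levi/normal-closure remarks do not close it. (Indeed, the argument in \cite{dn} is not of this ultraproduct flavour.) Your $H\times A$ observation is a useful sanity check but only rules out one specific way of pushing the probability toward $1$, not all of them. So: if you cite the known gap, you match the paper; if you insist on proving it yourself, the hard work remains.
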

\begin{proof}
By \cite{dn}, there exists a constant $\delta$ such that if $[x,y,y]$ is not the identity in $G,$ then the probability that
$[g_1,g_2,g_2]=1$ in $G$ is at most $\delta.$ By Theorem \ref{main}, we may take
$\tau=\frac{2}{1-\delta}.$
\end{proof}


\begin{thebibliography}{99}
\bibitem{aamz} A. Abdollahi, A. Azad, A. Mohammadi Hassanabadi,  M. Zarrin, B. H. Neumann's question on ensuring commutativity of finite groups, Bull. Austral. Math. Soc. 74 (2006), no. 1, 121--132.


\bibitem{dn}  C. Delizia,  U. Jezernik,  P. Moravec, C. Nicotera,  Gaps in probabilities of satisfying some commutator-like identities, Israel J. Math. 237 (2020), no. 1, 115–140.
\bibitem{gu} W. H. Gustafson, What is the probability that two group elements commute?  Amer. Math. Monthly 80 (1973), 1031--1034.

\bibitem{kst} T. Kövari, V. T. S\'{o}s, P. Tur\'{a}n,  On a problem of K. Zarankiewicz, Colloq. Math. 3 (1954), 50--57.

\bibitem{al} A. Lucchini, Applying the Kövári-Sós-Turán theorem to a question in group theory, Comm. Algebra 48 (2020), no. 11, 4966--4968.

\bibitem{neu} B. H. Neumann,  Ensuring commutativity of finite groups, Special issue on group theory, J. Aust. Math. Soc. 71 (2001), no. 2, 233--234.

\bibitem{py} L. Pyber, The number of pairwise non-commuting elements and the index of the centre in a finite group, J. London Math. Soc. (2) 35 (1987), 287--295.

\bibitem{swa} K. J. Swanepoel,  Favourite distances in 3-space, Electron. J. Combin. 27 (2020), no. 2, Paper No. 2.17, 11 pp.
\end{thebibliography}
\end{document}